\newtheorem{theorem}{Theorem}[subsection]
\newtheorem{definition}[theorem]{Definition}
\newtheorem{proposition}[theorem]{Proposition}
\newtheorem{remark}[theorem] {Remark}     
\newtheorem{example}[theorem]{Example}
\newcommand{\Lm}{\mathbb L}
\newcommand{\var}{\it Var}
\newcommand{\rk}{\rm rk}
\newcommand{\sgn}{\rm sgn}
\begin{document}

\author{Felipe Espreafico$^{\flat,\natural}$ and Johannes Walcher$^{\natural}$}

\title{On Motivic and Arithmetic Refinements of Donaldson-Thomas invariants}

\email{felipe.espreafico@impa.br}
\email{walcher@uni-heidelberg.de}

\date{May 20, 2023}

\subjclass{14N35, 81T30}
% \keywords{Enumerative geometry, dualities}
\thanks{
J.W.’s partial funding statement reads: This work is funded by the Deutsche Forschungsgemeinschaft (DFG, German 
Research Foundation) under Germany’s Excellence Strategy EXC 2181/1 — 390900948 (the Heidelberg STRUCTURES 
Excellence Cluster).F.E.'s funding statement reads: This work is funded by CAPES, under process number 88881.690115/2022-01}

\address{
{$^\flat$}Institute for Pure and Applied Mathematics \\
Rio de Janeiro  - RJ, 22460-320, Brazil \\[.2cm]
{$^\natural$}Institute for Mathematics, Heidelberg University \\ 69120 Heidelberg, Germany}

\begin{abstract}
In recent years, a version of enumerative geometry over arbitrary fields has been 
developed and studied by Kass-Wickelgren, Levine, and others, in which the counts obtained are not integers but quadratic forms. Aiming to understand the relation to other "refined invariants", and especially their possible interpretation in 
quantum theory, we explain how to obtain a quadratic version of Donaldson-Thomas invariants
from the motivic invariants defined in the work of Kontsevich and Soibelman and pose some questions. We calculate these invariants in a few simple examples that provide standard tests for these questions, including degree zero invariants of $\mathbb A^3$ and higher-genus Gopakumar-Vafa invariants recently studied by Liu and Ruan. The comparison with known real and complex counts plays a central role throughout.
\end{abstract}

\maketitle
 
\section{Introduction}

Over the years, it has been a fruitful endeavour to elucidate the structure of enumerative invariants
attached to various classes of problems stemming from algebraic geometry, symplectic geometry, and
theoretical physics, see e.g.\ \cite{thirteenhalves}. One of the common themes in this context has been the 
identification of problems attached to distinct geometric or algebraic setups ("duality"). 
Such an identification can uncover otherwise unexpected properties, such as regularity, 
recursions, or modularity, and often allows for exact solutions. More recently, it has been of interest to 
compare invariants attached to the same geometry, but defined in different theories, relying on higher
structures or hidden symmetries ("refinement"). It is ultimately reasonable to expect, if not outright necessary 
for coherence, that any refinement or structure present in one setup or theory can be ported to all the others, 
and any missing piece is as much an opportunity for further progress.

In this note, we will be concerned mainly with the youngest type of "arithmetic invariants"  which
are defined in $\mathbb A^1$-enumerative geometry and take values in the Grothen\-dieck-Witt ring of
quadratic forms, and the somewhat older "motivic invariants", which take values in the Grothendieck ring
of varieties. We briefly recall their salient features here, and their more proper definition in the main
text.

$\mathbb A^1$-Enumerative geometry consists in applying the machinery of so-called 
$\mathbb A^1$-homotopy theory to classical enumerative problems. This machinery was introduced in the 90s, in the 
work of Morel and Voevodsky \cite{morel_1-homotopy_1999}. The main idea is to consider the affine line $\mathbb A^1$ 
rather than the interval $[0,1]\subset \mathbb R$ as a parameter space and thereby construct a homotopy theory for schemes. This allows 
us to define an algebraic or $\mathbb A^1$-degree for maps between schemes, which naturally generalizes the classical 
degree from algebraic topology. While the natural ring where this degree is defined is not $\mathbb Z$ but rather 
the Grothendieck-Witt ring of quadratic forms, the theory has the highly attractive feature of being defined over arbitrary
fields, rather than just the complex or real numbers. The latter, numerical, counts can be recovered as special cases from rank and 
signature of the quadratic form. Problems which have been solved in this theory include the classical enumeration of lines
on a cubic surface \cite{kass_arithmetic_2021}, lines on a quintic threefold \cite{pauliquintic} and the enumerative geometry of
twisted cubics \cite{levine_quadratic_2022}. The latter study in particular relies on Atiyah-Bott localization, which was 
extended to this context in \cite{levine_atiyah-bott_2022}. The results for the quintic specialize to the real
counts first obtained in \cite{solomonintersection} and \cite{walcher2007}. Even more recently, a refined enumeration of rational curves through points in a del Pezzo surface was obtained in \cite{Kass_Levine_Solomon_Wickelgren_2023}, which in particular refines the classical counts of degree $d$ rational curves in $\mathbb P^2$ trough $3d-1$ points. 

Motivic Donaldson-Thomas (DT) invariants were introduced from a rather different direction by Kontsevich and Soibelman
\cite{kontsevich_stability_2008}. Their main motivation was to understand wall-crossing phenomena in the theory of
BPS states and specifically the enumerative geometry of Calabi-Yau 3-folds. The most basic observation in this context
is that enumerative invariants arise from a perfect obstruction theory on a moduli space (equipped with a stability 
condition), and the basic idea is to identify that algebraic obstruction class directly as a "motivic invariant", while 
the standard "numerical" DT-invariants can be recovered via the Euler Characteristic morphism (or motivic integration).

Apart from circumstantial evidence in the B-model \cite{krefl2010}, an early indication that the "refined" theory 
might be related to an enumerative geometry over fields other than the complex numbers appeared in work of
G\"ottsche and Shende \cite{Goettsche_Shende_2014} on the enumerative geometry of curves on surfaces. The main
observation in this regard was that the real count, defined via the compactly supported Euler characteristic, 
appears in a special limit of the refinement parameter that is different from the standard complex one.

Our goal here is to shed light on the relationship between these two generalizations of classical enumerative 
invariants, and on how they arise by composing the complex and real counts, which are either known or accessible 
by more standard methods. The basic mathematical idea is to utilize the compactly supported $\mathbb A^1$-Euler 
characteristic 
$$
\chi_c^{\mathbb A^1}:K_0(\var(k))\to GW(k) \,,
$$
which is already considered for a similar purpose in the papers \cite{arcila-maya_compactly_2022} and 
\cite{Röndigs_2016}, to go from motivic to arithmetic invariants, in a way that is compatible with taking
numerical degrees on the two sides. We illustrate the discussion by two examples which to our knowledge have
not been studied in the arithmetic context before. The first is the Hilbert scheme of points on $\mathbb A^3$, for 
which the motivic DT invariants were computed in the paper \cite{behrend_motivic_2013}. Using their formula for 
the motivic partition function of the Hilbert scheme of points of $\mathbb C^3$ and passing to the arithmetic 
invariants, we get the expected result over $\mathbb R$ (as computed in \cite{krefl_real_2010}) and, as a byproduct,
a count that would make sense over any field. Our second example is based on the recent calculation of Gopakumar-Vafa
invariants at the Castelnuovo-Mumford bound by Liu-Ruan \cite{liu_castelnuovo_2022} and S.\ Feyzbakhsh in \cite{alexandrov2023quantum}
and also leads to a prediction  for a count over any field.

In all these examples, the knowledge of the real (and of course, the complex) count is sufficient to determine the 
arithmetic invariant (i.e., the value of the invariant over any field). On the other hand, the real enumerative 
invariants admit a physics interpretation, which mimics the relationship between real and open Gromov-Witten theory, 
as "open" or "relative" BPS invariants. This is indicative of a much deeper relationship between arithmetic refinements
of enumerative geometry and the physics of BPS states. In fact, our interest in the arithmetic invariants arises 
in part from the observations, recorded in \cite{walcher2012arithmetic} and otherwise, that enumerative predictions of
mirror symmetry depend in general on an arithmetic structure of the mirror manifold, and that this is especially
true in the context of relative invariants. These observations have not been explained from the motivic point of
view. 

An initial idea for the physics interpretation of the quadratic form is the occurence of the Eisenbud–Levine–Khimshiashvili 
class as local $\mathbb A^1$ degree \cite{wickelgrenEisenbud}. In such examples, the function whose critical
locus defines the enumerative invariant enters as superpotential into the action functional of the physical theory.
A critical point defines a supersymmetric vacuum, and a refinement via the Hessian would correspond to keeping track of 
the masses of the physical fields.

To summarize, the main purpose of this paper is not so much to claim any new fundamental insight, but merely to
point out that the arithmetic and motivic refinements are too closely related for this to be an accident, that this applies
specifically in a class of examples that are highly relevant in the context of BPS state counting and M-theory dualities,
and thereby to pose the question for a physics interpretation of all these structures.

\subsection*{Acknowledgments} F.E. would like to thank to Sabrina Pauli and Stephen McKean for enlighting discussions about $\mathbb A^1$-enumerative geometry.

\section{Motivic DT invariants}
\subsection{The Grothendieck ring of varieties}
Let $k$ be a field of characteristic zero. 

\begin{definition}
The Grothendieck ring of varieties is the abelian group generated by isomorphism classes of all varieties over $k$ 
modulo relations 
$$[X] = [Y] + [X\setminus Y],$$ 
where $Y$ is closed in $X$. 
		
The product structure that makes $K_0(\var(k))$ a ring is given by the usual Cartesian product $[X]\cdot [Y] = [X\times Y]$. 
The class of the affine line $[\mathbb A_k^1]$ is denoted by $\mathbb L$ in $K_0(\var(k))$ and we set 
$$\mathcal M_k:=K_0(\var(k))[\mathbb L^{-\frac12}],$$
which will be called ring of motivic weights.
\end{definition}
	
	 \begin{remark}
	 	The following facts are true:
	 \begin{itemize}
	 	\item If $f:X\to S$ is a Zariski locally trivial fibration with fiber $F$, then $[X] = [S]\cdot [F]$.
	 	\item If $f:X\to Y$ is a bijective morphism, it is true that $[X] = [Y]$.
	 \end{itemize} 
	 \end{remark}

With this notation, the classes of cellular varieties can be computed in terms of 
$\mathbb L$, for example,
$$
[\mathbb P^n] = 1 + \mathbb L + \mathbb L^2 + \dots + \mathbb L^n
$$
and a similar expression can be written for the Grassmanians:
$$
[Gr(n,k)] = \frac{(\Lm^n-1)(\Lm^{n-1}-1)\cdots(\Lm-1)}{(\Lm^{n-k})(\Lm^{n-k-1})\cdots(\Lm-1)\cdot(\Lm^k-1)(\Lm^{k-1}-1)\cdots(\Lm-1)}.
$$

  \begin{remark}
      Notice that, over non algebraically closed fields, 
      % varieties do not correspond to algebraic sets, i.e.
      besides the rational points, varieties have also points which correspond to Galois orbits over the algebraic closure. 
      This gives rise to different classes of points in $K_0(\var(k))$, which are the classes of ${\rm Spec(L)}$, with $L\supset k$.
      % In other words, the same equations can lead to different classes depending on the field.
  \end{remark}

  \begin{example}
      Over $\mathbb R$, we have two classes of points: $[{\rm Spec}\mathbb R]$ and $[{\rm Spec}\mathbb C]$. Therefore, 
      although $\mathbb P^1(\mathbb R)$ is a circle, there are non rational points which correspond to pairs of 
      complex conjugate points. Indeed, $\mathbb P^1$ is not isomorphic to $X = \{x^2 + y^2 =1\}\subset \mathbb A^2$, 
      since there are two complex conjugate points "at infinity". In terms of the Grothendieck ring of varieties, we 
      get $[X]=\Lm+1 - [{\rm Spec}\mathbb C]\in K_0(\var(\mathbb R))$.
      \end{example}
Over $k=\mathbb C$, there is a morphism to $\mathbb Z$ given by the Euler characteristic with compact support. 
$$
\chi_c:K_0(\var(\mathbb C))\to \mathbb Z
$$
which can be extended to a morphism\label{extend}
$$
\chi^{\mathbb C}_c:\mathcal M_{\mathbb C}\to \mathbb Z
$$
if we send $\mathbb L^{\frac 12}$ to $-1$ (since $\mathbb L$ goes to 1). The same can be done for considering the 
Euler characteristic of the real points. In Section \ref{A1}, we will introduce the $\mathbb A^1$-Euler characteristic, 
which is defined for varieties over any field. Notice that we have to be careful when extending to $\mathcal M_{\mathbb R}$. 
As $\chi^{\mathbb R}_c(\mathbb L) = -1$, the extended Euler characteristic takes values in $\mathbb Z[i]$, where $i=\sqrt{-1}$. 
$$
\chi^{\mathbb R}_c:\mathcal M_{\mathbb R}\to \mathbb Z[i]
$$

If one does not want to consider compactly supported Euler characteristics, one could actually only consider projective 
varieties. In fact, the abelian group generated by projective varieties with relations given by
\begin{align}
    \label{blowup}
    [X] - [Y] &= [{\rm Bl}_YX] - [E]\\
    [\varnothing] &= 0
\end{align}
where ${\rm Bl}_YX$ is the blow up of $X$ along $Y$ and $E$ is the exceptional divisor, is isomorphic to $K_0(\var(k))$. 
For details, see Theorem 3.1  of \cite{bittner_universal_2004}.

	% Again, taking $\mathbb P^n$ as an example, we have $\chi_c^{\mathbb C}([\mathbb P^n]) = n+1$, which is exactly what we get by setting $\mathbb L=1$ in the expression above. Similarly, $\chi_c^{\mathbb R}([\mathbb P^n]) = 1$ for $n$ even and $0$ for $n$ odd, which is what we get setting $\mathbb L=-1$.

 \subsection{Donaldson-Thomas invariants} \label{dtinv}

We consider Donaldson-Thomas invariants following \cite{Maulik_2006}. Those are simply numbers\footnote{For emphasis, one sometimes
refers to them as "numerical DT-invariants", to distinguish them from their "motivic" version, defined below.} obtained by integration against 
virtual classes of moduli spaces of ideal sheaves in the complex setting. If $M$ is a smooth Calabi-Yau threefold, any ideal 
sheaf defines a subscheme $N$. Then, the moduli spaces we are interested in are the ones of the form $X=I_n(M,\beta)$ and 
consist of sheaves for which $N$ represents the homology class $\beta$ and $\chi(\mathcal O_N) = n$. Usually, we are 
interested in the cases for which $N$ has dimension at most one, which implies that $\beta$ is in $H_2(M)$. $I_n(M,\beta)$ is, 
then, isomorphic to a Hilbert Scheme of curves in $M$. In particular, for the case $\beta = 0$, we are considering exactly 
the situation in which $N$ has dimension 0, that is, $I_n(M,0)$ is the Hilbert Scheme of points of $M$.

The DT invariants of $M$ are defined by integrals:
\begin{equation}
    DT(n,\beta) = \int_{[I_n(M,\beta)]^{{\it vir}}}1
\end{equation}
where $[I_n(M,\beta)]^{vir}$ is a virtual class in the homology defined via obstruction theory. It was proved, in 
\cite{Behrend_2009}, to depend only on the scheme structure of the moduli space and not on the chosen obstruction theory. 
In particular, if the moduli space ends up being smooth, this virtual class is, up to a factor $(-1)^{\dim I_n(M,\beta)}$, 
simply the Poincaré dual of the top Chern class, which means that the integral above is given, up to sign, by the Euler characteristic 
with compact support of the moduli space. 

A natural way of refining such invariants is, therefore, to consider these virtual classes as elements in the Grothendieck 
ring of varieties, since the Euler characteristic gives us a natural morphism from this ring to $\mathbb Z$. If the moduli 
space is smooth these virtual classes should be given by the classes of $I_n(M,\beta)$ in the ring $\mathcal M_{\mathbb C}$, 
since the DT invariants coincide with the Euler characteristic. Notice, though, that, if the moduli space is singular, it 
is usually not straightforward to define such virtual classes in the Grothendieck ring of varieties.  Pictorially, we want 
a commutative diagram as below. 
\begin{equation}
\begin{tikzcd}
\label{dt-diag}
\left\{{\rm Moduli\ Spaces\ }I_n(M,\beta)\right\}\arrow[dddd, dashed,"\text{Virtual Classes}" left]\arrow[rdd,"\text{DT Theory}"]\\
\\
&\mathbb Z \\
\\
\mathcal M_{\mathbb C}\arrow[uur,"\chi_c" below]
\end{tikzcd} 
\end{equation}

The idea of refining DT invariants via the Grothendieck ring of varieties was first considered in \cite{kontsevich_stability_2008}. 
For us, the interest behind this is that, as already pointed out in the introduction and in \cite{Goettsche_Shende_2014}, by 
considering real and complex Euler characteristics, we can, in some cases, recover real DT invariants from the same formulas 
in the Grothendieck ring. In general, the virtual classes are not in $\mathcal M_k$ (or $\mathcal M_{\mathbb C}$), but in a 
slightly bigger equivariant version of this ring. In the next section, we present a way to define virtual classes over a 
general field $k$ of characteristic zero when the moduli spaces are defined over $k$ and can be represented as a critical 
locus of a smooth function, based mainly in the work of Denef and Loser \cite{Denef_Loeser_2002}. In practice, this is 
the case in many interesting examples in which $D^b(X)\cong D^b(\mathbf{Q}-{\rm mod})$, that is, in which we can represent 
the moduli spaces as quiver varieties.

\subsection{The motivic nearby class}\label{motivic-nearby-cycles}  Let $X$ be a smooth variety and $f:X\to \mathbb A^1$ be 
a regular function. Denote the central fibre $f^{-1}(0)$ by $X_0$. Our goal is to define a class $[Z]_{{\it vir}}$ for $Z = \{df = 0\}$, 
the critical locus of $f$, which encodes also the scheme structure of $Z$. 
% In \cite{behrend_motivic_2013}, Behrend, Bryan and Szendrői worked in this setting for $k=\mathbb C$ with a torus action on 
% $X$. Basically, they were able to prove the following, making use of the concepts of motivic vanishing cycle and motivic 
% nearby cycle, which were defined in \cite{Denef_Loeser_2002}. We now recall these concepts.
\begin{definition}
Let $\mu_n$ be the group of $n-$roots of unity in $\bar k$. Notice that it has a structure of algebraic variety over $k$. 
We have maps $\mu_{nd}\to \mu_n$ given by $x\mapsto x^d$. This gives us a projective system. We denote the limit by $\hat\mu$. 
A good $\mu_n$ action on $X$ is a group action $\mu_n\times X\to X$ which is a morphism of varieties and such that each orbit 
is contained in an affine subvariety of $X$. A good $\hat\mu$-action is a group action $\hat\mu\times X\to X$ which factors 
through a good action of $\mu_n$ for some $n$.
\end{definition}

\begin{definition}
Consider the abelian group generated by symbols $[X, \hat \mu] = [X]$, where $X$ is a varitey with a good $\hat\mu$-action, 
modulo isomorphisms compatible with the action. The relations are the same scissor relations $[X] = [Y] + [X\setminus Y]$ 
that we have for $K_0(\var(k))$ but such that the action on $Y$ is induced by the one in $X$. Finally, there is one more 
relation given by $[X\times V] = [X\times \mathbb A^n]$, where $V$ is the affine space of dimension $n$ with a good $\hat\mu$-action 
and, on the other side, $\mathbb A^n$ has the trivial action. The product is given by the usual Cartesian product with the product 
action. The class of the affine line with the trivial action is denoted by $\mathbb L$. This ring will be denoted 
$K_0(\var^{\hat\mu}(k))$ and its localization $K_0(\var^{\hat\mu}(k))[\mathbb L^{-\frac12}]$ by $\mathcal M^{\hat\mu}_k$.
\end{definition}

Take a resolution of $f:X\to \mathbb A^1$, that is, a map $h:Y\to X$, with $Y$ smooth and irreducible such that $Y_0 = h^{-1}(X_0)$ 
has only normal crossings and the restriction $h: Y-Y_0\to X-X_0$ is an isomorphism. We fix the notation:
\begin{itemize}
\item $E_i$, $i\in J$ denote the components of $Y_0$ and $N_i$ their multiplicities;
\item $E_I$, $I\subset J$ denote the intersections $\bigcap_{i\in I}E_i$;
\item $E_I^{\circ}$, $I\subset J$ denote $E_I - \bigcup_{j\notin I} E_j\cap E_I$
\end{itemize}
To take multiplicities into account, we define natural covering spaces $\tilde E_I^{\circ}\to E_I^{\circ}$, which are 
unramified and Galois with Galois groups given by $\mu_{m_I}$, where $m_I$ is the greatest common divisor of the multiplicities 
of all $E_i, i\in I$. 
% To to this, we exhibit a covering of $\tilde E_{I}^{\circ}$ by open subsets. 
Take $U\subset Y$ an open set such that $f\circ h = uv^{m_I}$, where $u$ is a unit in $\mathcal O_Y(U)$ and $v$ is a 
morphism $v:U\to k$. We can define $\tilde E_I^{\circ}$ by gluing the sets
$$
\{ (x,t)\in (U\cap E^{\circ}_I)\times\mathbb{A}^1 | t^{m_I} = u^{-1}(x) \}.
$$
The considerations above show that $\tilde{E}^{\circ}_I$ is a Galois covering and that there exists a natural good action 
of $\mu_n$ (and thus, of $\hat\mu$) on it.

\begin{definition}
	The motivic nearby class of $f$ is defined as follows:
	\begin{equation}
		\label{motivicnearby}
	S_f:=\sum_{\varnothing\neq I}(1-\mathbb L)^{|I| - 1}[\tilde E_I^o]\in \mathcal M_k^{\hat\mu}
	\end{equation}
\end{definition}
Among other remarks, the belief is stated in \cite{Denef_Loeser_2002} that this is a motivic incarnation of the 
complex of nearby cycles of $X_0$, which was defined in Exposé XIII of \cite{SGA7}. Restricting $S_f$ to a point 
$x\in X_0$, we get the local version, which the authors believe to be a motivic version of the classical Milnor fibre of $f$ over $x$.
\begin{equation}
		\label{localmotivicnearby}
	S_{f,x}:=\sum_{\varnothing\neq I}(1-\mathbb L)^{|I| - 1}[\tilde F_I^o]\in \mathcal M_k^{\hat\mu}
	\end{equation}
where $F_i$ is the fiber of $E_i$ over $x$.

\begin{example}\label{mfex}
Let $f:\mathbb A^2\to k$ be given by $f(x,y) = x^2-y^2$. Then $X_0$ has already only normal crossings, which are the two lines $E_1$ and $E_2$ intersecting at the origin. We have:
\begin{align*}
&E_1^{\circ} = E_1 - 0\\
&E_2^{\circ} = E_2 - 0\\
&E_{1,2}^{\circ} = 0
\end{align*}

As all multiplicities are equal to 1, we do not need to worry about the covering spaces. Computing $S_f$, we get:

\begin{equation}
\label{mfgex}
  S_f = [E_1 - 0] + [E_2 - 0] + (1-\Lm)[0] = \Lm - 1 + \Lm - 1 + 1-\Lm = \Lm-1  
\end{equation}

For $S_{f,0}$, we can write:
\begin{equation}\label{mflex}
 S_{f,x} = [(E_1 - 0)\cap h^{-1}(0)] + [(E_2 - 0)\cap h^{-1}(0)] + (1-\Lm)[0\cap h^{-1}(0)] = 1-\Lm   
\end{equation}

\end{example}

It is instructive to compare these results with what happens over $\mathbb R$ and $\mathbb C$. The Milnor fibre over a 
point $x\in X_0$, over $\mathbb C$, is defined as the intersection of the preimage $f^{-1}(w)$, for small $w\in\mathbb C$, 
with a small ball around $x$. This gives us a cylinder around $x$, which has Euler characteristic 0: this is what we get 
by making $\Lm = 1$ in equation \ref{mflex}. Considering the same definition over $\mathbb R$ (which cannot be done in 
all cases, considering that the topology of the Fiber $f^{-1}(w)$ can change depending on $w$), we get two hyperbola 
segments. As we are intersecting with a ball, those are compact and the Euler characteristic should be 2, which is 
compatible with making $\Lm = -1$ in equation \ref{mflex}.  If we do not intersect with the ball (that is, consider, 
in some sense, a global fibre), we would get an infinite cylinder ($\chi_c = 0$) or two non compact branches of 
hyperbola($\chi_c = -2$). This is compatible with equation \ref{mfgex}.

As we will point out in Section \ref{A1}, $\mathbb A^1$-enumerative geometry has an arithmetic incarnation of the Milnor number for isolated singularities, which is the EKL class of the gradient of $f$. Although, in order to compute DT invariants, we use the definition above in the case of non isolated singularities, an interesting problem would be to understand the relationship between Milnor numbers and the class $S_{f}$. We elaborate on this in Section \ref{A1}.

\subsection{Virtual class of a critical locus} \label{virtclasses}
\begin{definition}\label{virtclassdef}
   Let $X$ be a smooth variety and $f:X\to \mathbb A^1$ be a regular function. If $Z$ is the critical locus of $f$ and $X_0 = f^{-1}(0)$, the virtual class of the critical locus $Z$ is defined as
   \begin{equation}
        [Z]_{{\it vir}} = -\mathbb L^{-\frac{\dim X}{2}}(S_f-[X_0])\in\mathcal M_k^{\hat\mu}
   \end{equation}
\end{definition}

The idea behind this definition is that, over the smooth points of $X_0$, the Milnor fibre should correspond to $X_0$ itself. This means that this virtual class encodes exactly what is happening on the singular locus of $X_0$. Notice that this definition depends on the function $f$ chosen, that is, it depends on how the variety is presented as a critical locus.

\begin{example}\label{smoothvirtual}
    If $f = 0$ is the zero function, then $Z = X_0 = X$ (every point is critical). We can compute $S_f = 0$ using the fact that $X_0$ can be seen as the zero divisor. Therefore:
    $$
    [Z]_{{\it vir}} = -\Lm^{-\frac{\dim X}2}(0 - [X_0]) = \Lm^{-\frac{\dim X}2}[X]
    $$
\end{example}
This example shows that the virtual class of a smooth variety is its class in $\mathcal M_k$ up to multiplication by a factor. In terms of $DT$ invariants, this factor correspond to the sign difference that we get with respect to the Euler characteristics.

The virtual class is not always in $\mathcal M_k$ but, over $\mathbb C$, Behrend, Bryan and Szendröi showed, in \cite{behrend_motivic_2013}, that, when $f$ is equivariant with respect to a torus action on $X$, $[Z]_{{\it vir}}\in \mathcal M_k$ and can be computed from the difference between the zero fiber and the generic fiber.

\begin{proposition}[cf. Thm. B.1 in \cite{behrend_motivic_2013}]
\label{torus}
Let $f:X\to \mathbb C$ be a regular morphism on a smooth quasi-projective variety. Let $Z$ be 
the critical locus of $f$. Assume that there exists an action of a connected complex torus on $X$ 
such that $f$ is equivariant with respect to a primitive character. If there is a one parameter 
subgroup $\mathbb C^{\times}\subset T$ such that the induced action is circle compact, that 
is, the set of fixed points is compact and the limit $\lim_{\lambda\to 0}\lambda x$ exists 
for all $x\in X$, then, the virtual class $[Z]_{{\it vir}}$ is given by
$$
[Z]_{{\it vir}} = -\mathbb L^{-\frac{\dim X}{2}}([X_1]-[X_0])\in \mathcal M_{\mathbb C},
$$
where $[X_1]$ is the class of the fibre $f^{-1}(1)$. 
\end{proposition}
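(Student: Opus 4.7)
The plan is to reduce to showing that, under the torus-equivariance and circle-compactness hypotheses, the motivic nearby class satisfies $S_f = [X_1]$ in $\mathcal{M}^{\hat\mu}_{\mathbb{C}}$, with $X_1$ carrying the trivial $\hat\mu$-action. Substituting this identity into Definition \ref{virtclassdef} then immediately yields $[Z]_{{\it vir}} = -\mathbb{L}^{-\dim X/2}([X_1] - [X_0])$, as claimed.

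First I would use the torus action to trivialize $f$ away from the central fibre: pick a one-parameter subgroup $\mathbb{C}^* \hookrightarrow T$ on which the primitive character $\chi$ of $T$ restricts non-trivially, and rescale so that $f(\lambda \cdot x) = \lambda f(x)$. The map $(\lambda,x) \mapsto \lambda \cdot x$ then gives an isomorphism $\mathbb{C}^* \times X_1 \cong X \setminus X_0$, so in particular all nonzero fibres are isomorphic to $X_1$ and $[X] - [X_0] = (\mathbb{L} - 1)[X_1]$ in $K_0(\var(\mathbb{C}))$. The circle-compactness hypothesis is what makes this $\mathbb{C}^*$-action interact well with the non-proper variety $X$, supplying the well-defined limits needed to control behaviour at infinity.

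Next I would compute $S_f$ using a $T$-equivariant resolution $h: Y \to X$, which exists by equivariant Hironaka. The components $E_i$, their intersections $E_I$, and the strata $E_I^\circ$ are then $T$-invariant, and the auxiliary coverings $\tilde{E}_I^\circ$ inherit compatible $T$-actions. The key point is that the $\hat\mu$-action on $\tilde{E}_I^\circ$ encoding the local monodromy of $f$ around the exceptional divisors factors through the chosen subgroup $\mathbb{C}^* \hookrightarrow T$, so that via the relation $[X \times V] = [X \times \mathbb{A}^n]$ defining $\mathcal{M}^{\hat\mu}_{\mathbb{C}}$ the $\hat\mu$-action can effectively be trivialized, and $S_f$ identified with a class in the image of the forgetful map $\mathcal{M}_{\mathbb{C}} \to \mathcal{M}^{\hat\mu}_{\mathbb{C}}$.

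The main obstacle is the identification $S_f = [X_1]$ as $\hat\mu$-equivariant classes, not merely as elements of $\mathcal{M}_{\mathbb{C}}$. One route is a direct computation on $Y$, carefully tracking the Galois covers associated to the multiplicities $m_I$ and showing that the combinatorial contributions in \eqref{motivicnearby} assemble into the class of the generic fibre; a more conceptual route is a motivic Thom--Sebastiani argument, exploiting that global torus equivariance with primitive weight trivializes monodromy on the nearby cycles. In either approach the non-properness of $X$ is the technical sticking point, and this is precisely where circle-compactness does essential work, controlling the relevant motivic integrals via Bia\l{}ynicki--Birula-type retractions onto the (compact) fixed locus of $\mathbb{C}^*$.
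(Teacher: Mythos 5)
The paper does not prove this proposition: the ``cf.\ Thm.\ B.1 in \cite{behrend_motivic_2013}'' indicates that the statement and its proof are taken from the appendix of Behrend--Bryan--Szendr\H{o}i, and the present paper simply quotes the result and uses it as a black box. So there is no ``paper's own proof'' to compare against; the relevant comparison is with BBS's Theorem~B.1.

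Your sketch correctly identifies the essential content: by Definition~\ref{virtclassdef}, everything reduces to showing $S_f = [X_1]$ in $\mathcal M^{\hat\mu}_{\mathbb C}$ with trivial $\hat\mu$-action. The preliminary observations are also sound --- in particular, choosing a one-parameter subgroup on which the primitive character has weight exactly one (this is what primitivity buys you; ``rescaling'' a subgroup is not quite the right phrase) gives the isomorphism $\mathbb{C}^\times \times X_1 \cong X \setminus X_0$, and equivariant resolutions exist. However, you name the identification $S_f = [X_1]$ as ``the main obstacle'' and then offer two possible routes without carrying out either, so the sketch has an acknowledged hole precisely where the real work lives. Moreover, the mechanism you propose for killing the $\hat\mu$-action --- invoking the relation $[X\times V]=[X\times \mathbb A^n]$ --- is not the right tool: that relation only concerns affine-space bundles with linear $\hat\mu$-action, and it does not say that a $\mu_{m_I}$-action which extends to a $\mathbb{G}_m$-action becomes equivalent to the trivial action on an arbitrary variety $\tilde E_I^\circ$. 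The actual argument in BBS is more delicate: one has to track the covers $\tilde E_I^\circ$ through the equivariant resolution, use the weight-one equivariance of the unit $u$ in $f\circ h = u\,v^{m_I}$ to split those covers, and then perform a nontrivial bookkeeping of the combinatorial sum in \eqref{motivicnearby}; circle-compactness (via a Bia\l{}ynicki--Birula decomposition) enters to ensure the resulting motivic identities hold for the non-proper $X$. As it stands, your proposal is a correct reduction plus a plausible but unsubstantiated plan, not a proof.
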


The result above strongly uses the fact that the varieties are defined over $\mathbb C$, but our computations in 
section 4 show that, at least in the case of degree zero invariants of $\mathbb A^3$, the real DT invariants are 
also encoded in the formula. Inspired by the work of Levine on localization formulas in $\mathbb A^1$-enumerative 
geometry \cite{levine_atiyah-bott_2022}, one may think that it could be possible to extend these arguments to 
more general fields.

% \begin{conjecture} If one considers an action by the group $N$ from \cite{levine_atiyah-bott_2022} 
% instead of $T$, there should be a similar result to \ref{torus} allowing one to compute the 
% virtual class of $Z$.	
% \end{conjecture}
%I think I do not have enough to call it a conjecture. Maybe it's better to just leave it as an idea.

\section{\texorpdfstring{$\mathbb A^1$}{A1}-Enumerative Geometry}\label{A1}  

\subsection{Rings of Quadratic forms} 
In this section, $k$ continues to be a field of characteristic zero.
\begin{definition} The Grothendieck-Witt ring is the group completion of the set of all quadratic forms over $k$ up to isometries 
% i.e., two elements $q:V\to k$ and $q':W\to k$ are equivalent if there is a linear invertible map $T:V\to W$ for which $q'\circ T = q$, 
with the operations 
		\begin{equation}
		q+q':V\oplus W\to k\qquad q+q'(x,y)=q(x)+q'(y)
		\end{equation}
		\begin{equation}
		    qq':V\otimes W\to k\qquad qq'(x\otimes y)=q(x)q'(y)
		\end{equation}
where $q:V\to k$ and $q':W\to k$ are quadratic forms representing isometry classes.
 
  \end{definition}
  
There are two important natural maps from $GW(k)$ to $\mathbb Z$ which are the rank and the signature.
  \begin{itemize}
      \item $\rk:GW(k)\to \mathbb Z$ computes the dimension of the vector space in which the quadratic form is defined.

      \item $\sgn:GW(k)\to \mathbb Z$ computes the signature of the quadratic form.
  \end{itemize}
 
 \begin{remark}
      If we have a field extension $k\subset L$, we can define a map $GW(L)\to GW(k)$ by simply considering composition with the trace map $Tr_{L/k}:L\to k$.
 \end{remark}

 \begin{remark}
     Since every quadratic form can be diagonalized, $GW(k)$ is generated by elements of rank 1. These are represented by forms $q(x)=ax^2$, with $a\in k^{\times}$. They are denoted $\langle a\rangle\in GW(k)$. If $a$ is a square, $\langle a\rangle =\langle 1\rangle$ in $GW(k)$, and therefore the generators of $GW(k)$ are the the elements in $k^{\times}/(k^{\times})^2$. Using this notation, we write $\mathbb H = \langle1\rangle + \langle-1\rangle$. This is called the hyperbolic form. It has the property that $\langle a\rangle \mathbb H = \mathbb H$ for any $a\in k$.
 \end{remark} 
 % The quotient $GW(k)/H$ is called {\bf Witt ring of $k$} and is denoted by $W(k)$.
%\end{remark}				
The Grothendieck-Witt ring is suitable to refine the classical $\mathbb Z$-valued enumerative invariants (counts) defined over algebraically closed fields. Very roughly, the intuition is that we count points considering local "orientations" (as it is done over $\mathbb R$), which correspond to quadratic forms. Counting with such orientations allows us to get invariant counts over any field $k$. The ranks of these quadratic forms correspond to the size of the Galois orbit of the point and, therefore, they recover the counts over $\bar k$. The signature recovers the signed counts over $\mathbb R$ (when $k\subset \mathbb R)$, since the two square classes over $\mathbb R$ will correspond to the two possible local orientations (signs). The definition of the $\mathbb A^1$-degree of a map from $\mathbb A^n\to \mathbb A^n$ is a good example. 

\begin{definition}\label{deg}
    Let $P:\mathbb A^n\to \mathbb A^n$ be an étale morphism. The local degree of $P$ at a closed point $x$ with rational image $y=P(x)$ and which is isolated in its fiber is given by:
    \begin{equation}\label{localdeg}
        \mathrm{deg}^{\mathbb A^1}_{x}(P) = \mathrm{Tr}_{k(x)/k}\langle {\rm det} J(x)\rangle
    \end{equation}
    where $J(x)$ denotes the derivative (Jacobian) of $P$ at the point $x$ and $k(x)$ denotes the residue field of $x$. The degree of $P$ is simply the sum over all the preimages.
    \begin{equation}\label{globaldeg}
        \mathrm{deg}^{\mathbb A^1}(P) = \sum_{x\in P^{-1}(y)}\mathrm{Tr}_{k(x)/k}\langle {\rm det} J(x)\rangle
    \end{equation}
\end{definition} 

\begin{remark}
    When $f$ is not étale, the degree can still be defined using more involved machinery from homotopy theory. We refer the reader to section 2 of \cite{wickelgrenEisenbud} for a more complete exposition and to \cite{morel_1-homotopy_1999} for technical details.  
\end{remark}

Equation \eqref{localdeg} corresponds to our local "orientation" and, in \eqref{globaldeg}, we "count" all the points. The simplest example is the function $P(x) = x^2$ from $\mathbb A^1$ to $\mathbb A^1$.

\begin{example} Let $k=\mathbb Q$ and $P(x)=x^2$ in $\mathbb A^1$. The derivative is given by $J(x) = 2x$. Therefore, using equation \eqref{globaldeg} we can write, choosing $y=1$ and $y=-1$, the following formulas.
$$
        \mathrm{deg}^{\mathbb A^1}(P) = \mathrm{Tr}_{\mathbb Q(1)/\mathbb Q}\langle 2\rangle + \mathrm{Tr}_{\mathbb Q(-1)/\mathbb Q}\langle -2\rangle = \langle2\rangle+\langle-2\rangle=\mathbb H
$$
$$
 \mathrm{deg}^{\mathbb A^1}(P) = \mathrm{Tr}_{\mathbb Q(\sqrt{-1})/\mathbb Q}\langle 2\sqrt{-1}\rangle = \mathrm{Tr}_{\mathbb Q(\sqrt{-1})/\mathbb Q}\langle 1\rangle = \mathbb H
$$
In the second formula we used that $2\sqrt{-1} = (1+\sqrt{-1})^2$ and that the trace is simply the quadratic form $a+b\sqrt{-1}\mapsto a^2-b^2$.

Notice that $\rk(\mathbb H) = 2$, which is the number of preimages in the algebraic closure and that $\sgn(\mathbb H) = 0$, which is the real topological degree.
\end{example}
In the next section, we will introduce Chow-Witt groups, which are a generalization of the classical Chow groups to this context. The idea is to do intersection theory taking such "orientations" into account.

\subsection{Chow-Witt groups and Euler Characteristics} 
% Our goal here is to do intersection theory and, therefore, enumerative geometry over any field $k$. Instead of getting results over $\mathbb Z$, we get results over $GW(k)$. To do this, We need a cohomology theory with coefficients in $GW(k)$. We summarize here how this can be achieved and how to use it to define an Euler characteristic over $k$.
  
We start by recalling that, over $\bar k$, one can define sheaves $\mathcal K^M_n$, known as Milnor K-sheaves, for which
		\begin{equation}
			\label{Bloch-Kato}
			H^n(X,\mathcal K^M_n) \cong CH^n(X).
		\end{equation}
	
The idea is to replace this Chow groups by the Chow-Witt groups, defined in a way to have that the group associated to ${\rm Spec}(k)$ equals $GW(k)$. 
% As discussed before, our goal is to have counts which consider orientations.

The Chow-Witt groups are denoted $\widetilde{CH}(X,L)$ (for $L$ a line bundle), and were first defined in \cite{barge_groupe_2000}. The easiest way to define them is to generalize Equation \ref{Bloch-Kato}, i.e, write
		$$
		H^n(X,\mathcal K^{MW}_n(L)) \cong \widetilde{CH}^n(X,L),
		$$
		for a suitable family of sheaves $\mathcal K^{MW}_n$ twisted by lines bundles. 
  
  The line bundles can be considered up to squares, that is, $\widetilde{CH}^n(X,L')\cong \widetilde{CH}^n(X,L'\otimes L^{\otimes2})$ for any $L, L'$. This is related to the fact that $GW(k)$ is generated by classes in $k^{\times}/(k^\times)^2$. We have pullbacks and pushforwards for Chow-Witt groups, but we have to be careful about the line bundles. This new technical complication is a result of considering "orientations".
		\begin{itemize}	
			\item For $f:X\to Y$, relative dimension $d$:
			\begin{itemize}
				\item Pullback:$	f^*:\widetilde{CH}^n(Y,L)\to\widetilde{CH}^n(X,f^*L)$
				
				\item Pushfoward (for $f$ proper): $f_*:\widetilde{CH}^n(X,\omega_X\otimes f^*\omega_Y\otimes f^* L)\to\widetilde{CH}^{n-d}(Y,L)$
			\end{itemize}
		\end{itemize}	
		
		These definitions give us:
		\begin{itemize}
			\item  a map $\int_X:\widetilde{CH}^n(X,\omega_X)\to GW(k)$, given by pushforwarding to the point ${\rm Spec}( k)$;
			
			\item a definition of Euler class of a vector bundle $\varepsilon(V)\in \widetilde{CH}^{r}(X, {\rm det}^{-1}(V))$, via pushfowarding the class $1_X$ by a section and then pulling it back. 
		\end{itemize}
		
		Notice that we can only compute the integral of an Euler class (that is, count the zeros of a generic section), if the determinant of the bundle is, up to squares, given by $\omega_X$. This gives a notion of (relatively) orientable vector bundles. For the tangent bundle $TX$, we have that ${\rm det}^{-1}(TX) = {\rm det}^{-1}((\Omega^1_X)^{-1}) = \omega_X$. This implies that we can compute $\int_X\varepsilon(TX)$, for any $X$.
		
		\begin{definition}
			\label{euler}
			The Euler characteristic of a smooth and projective variety $X$ can be defined as
			$$
			\chi^{\mathbb A^1}(X) := \int_X \varepsilon(TX)
			$$
		\end{definition}
		
		If $X$ is not projective, one can still define the Euler characteristic. We could consider the motivic stable homotopic category and use the fact that the infinite suspension spectrum of $X$ is strongly dualizable and gives rise to and endomorphism of the sphere spectrum, which corresponds to an element of $GW(k)$. The relationship between these two definitions is explained in \cite{levine_aspects_2020} and was first proved in  \cite{Levine_Raksit_2020}. The same idea can extend this map to a compactly supported Euler characteristic. 
		
	   With this definition in hand, we can consider the map defined on $K_0(\var(k))$ that takes any variety and evaluates its compactly supported Euler characteristic. Even tough we have not defined this for non projective varieties, as stated in the introduction, we only need to consider projective varieties. This was already pointed out in \cite{arcila-maya_compactly_2022} and they stated the following:
		
		\begin{proposition}[cf. \cite{arcila-maya_compactly_2022} Thm. 1.13]
		\label{morphism}
			Let $k$ be a field with $char(k) = 0$. Then the compactly supported $\mathbb A^1$-Euler characteristic is well defined and the following map:
			$$
			\chi^{\mathbb A^1}_{c}:K_0(\var(k))\to GW(k)
			$$
			is a homomorphism of rings. 
		\end{proposition}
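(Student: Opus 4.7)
The plan is to exploit the Bittner presentation of $K_0(\var(k))$ recalled earlier in the excerpt: the group is generated by classes of smooth projective varieties modulo the blow-up relation \eqref{blowup} together with $[\varnothing]=0$. Since the $\mathbb A^1$-Euler characteristic from Definition \ref{euler} is defined precisely on smooth projective varieties via $\int_X\varepsilon(TX)$, it suffices to check that the assignment $[X]\mapsto \chi^{\mathbb A^1}(X)$ descends to a well-defined group homomorphism on this presentation, and then to verify multiplicativity on products of smooth projective varieties. The well-definedness on $K_0(\var(k))$ then automatically yields an extension of the compactly supported $\mathbb A^1$-Euler characteristic to \emph{all} varieties, at which point the name $\chi^{\mathbb A^1}_c$ is justified by comparison with the motivic stable homotopy-category definition mentioned after Definition \ref{euler}.

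The step I would tackle first is the blow-up relation. Given a smooth closed subvariety $Y\subset X$ of codimension $c$, the exceptional divisor of $\mathrm{Bl}_YX\to X$ is the projectivized normal bundle $E=\mathbb P(\mathcal N_{Y/X})\to Y$. Thus I would reduce the desired identity
\[
\chi^{\mathbb A^1}(X)-\chi^{\mathbb A^1}(Y)=\chi^{\mathbb A^1}(\mathrm{Bl}_YX)-\chi^{\mathbb A^1}(E)
\]
to two independent computations: a projective bundle formula expressing $\chi^{\mathbb A^1}(\mathbb P(V))$ over a base $Y$ in terms of $\chi^{\mathbb A^1}(Y)$ and $\chi^{\mathbb A^1}(\mathbb P^{c-1})$, and a blow-up formula for $\chi^{\mathbb A^1}(\mathrm{Bl}_YX)$. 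Both are multiplicative with respect to the Grothendieck-Witt class $\chi^{\mathbb A^1}(\mathbb P^{c-1})=\lceil c/2\rceil\langle 1\rangle+\lfloor c/2\rfloor\langle -1\rangle$, and a straightforward algebraic rearrangement using this formula yields the cancellation required.

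For the multiplicativity $\chi^{\mathbb A^1}(X\times Y)=\chi^{\mathbb A^1}(X)\cdot \chi^{\mathbb A^1}(Y)$ on smooth projective $X,Y$, I would appeal to the fact that $TX\times Y=p_X^*TX\oplus p_Y^*TY$ on the product, together with the Whitney sum formula for the Euler class in Chow-Witt theory and Fubini-type compatibility of the pushforward $\int_{X\times Y}=\int_X\int_Y$. Alternatively, this step is immediate in the motivic stable homotopy-category definition since the trace of an identity is multiplicative for smash products of strongly dualizable spectra, which is how it is handled in \cite{Levine_Raksit_2020}.

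The main obstacle I anticipate is the projective bundle / blow-up formula in the Chow-Witt setting, which requires care because the relevant Chow-Witt groups are twisted by line bundles and the cohomological computation of $\widetilde{CH}^*(\mathbb P(V))$ is more subtle than in the classical Chow setting. This is precisely the technical content of \cite{arcila-maya_compactly_2022}, and once that input is granted, verifying that the blow-up relation maps to an equality in $GW(k)$ is a matter of bookkeeping with the explicit values of $\chi^{\mathbb A^1}(\mathbb P^n)$. Everything else, including vanishing on $\varnothing$, is formal.
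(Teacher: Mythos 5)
Your proposal is correct and takes essentially the same route the paper gestures at: the paper's own ``proof'' is a one-sentence pointer to the cited reference and to the Bittner presentation it recalled just before the proposition, and your sketch faithfully unpacks that --- reduce well-definedness to the blow-up relation on smooth projective varieties via Bittner, verify it using the Chow--Witt projective bundle and blow-up formulas (the genuine technical input, correctly attributed to the reference), and obtain multiplicativity either from the Whitney sum formula or from strong dualizability in $SH(k)$. Your explicit value $\chi^{\mathbb A^1}(\mathbb P^{c-1})=\lceil c/2\rceil\langle 1\rangle+\lfloor c/2\rfloor\langle -1\rangle$ is also right, and the algebraic cancellation you describe does go through.
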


The proof of the above proposition simply follows from the usual properties of Euler characteristics, which are compatible with the relations in $K_0(\var(k))$. The morphism from Proposition \ref{morphism} can be extended to the localization of $K_0(\var(k))$ in the same way as the topological Euler characteristics (see Section \ref{extend}) after adjoining a square root of $\langle-1\rangle$ to $GW(k)$.
	\begin{equation}
		\label{morphsimA1}
		\chi_c^{\mathbb A^1}:\mathcal M_k \to GW(k)(\alpha)
	\end{equation}	
	where $\alpha$ is such that $\alpha^{2} = \chi_c^{\mathbb A^1}(\mathbb L) = \langle-1\rangle$

The morphism above allows us to get a "numerical"  version of DT invariants over any field, given that the motivic version defined in \ref{virtclassdef} is in $\mathcal M_k$ (seen as a subring of $\mathcal M^{\hat \mu}_k$). We call these invariants {\it arithmetic DT invariants.} One could also consider an equivariant version of the $\mathbb A^1$-Euler characteristic in order to define arithmetic invariants even when the virtual classes are not in $\mathcal M_k$. 

An interesting question is whether there is a direct definition of such invariants which is not related to the Grothendieck ring of varieties but defined directly using the tools of $\mathbb A^1$-homotopy theory. 

\subsection{Local \texorpdfstring{$\mathbb A^1$}{A1}-degree and Eisenbud-Khimshiashvili-Levine classes}
\label{degree} We now discuss some aspects of the Eisenbud-Khimshiashvili-Levine (EKL) classes, which, as we stated in the introduction and in section \ref{localmotivicnearby}, are an important ingredient for possible physical interpretations and might be key to finding a relationship between the motivic nearby cycles and the $\mathbb A^1$-version of the Milnor number. 

In the papers \cite{Eisenbud_Levine_1977} and \cite{khimshiashvili1977local}, the idea was to find a way of computing the local topological degree of $P:\mathbb R^n\to \mathbb R^n$ from algebraic information in the local ring of $P$ at 0. They showed that there was a quadratic form (the EKL form) defined on the local algebra whose signature corresponded to the local degree. The same problem was considered for holomorphic maps, and the rank of the EKL form ended up being equal to the local degree.

The rank and signature appearing as above is already a hint that, over an arbitrary field $k$, the class of the EKL form (EKL class) in $GW(k)$ should be equal to the refinement of the local topological degree introduced in definition \ref{deg}. Using such refinement, J.\ Kass and K.\ Wickelgren showed that the class of the EKL form in $GW(k)$ correspond to the local $\mathbb A^1$-degree \cite{wickelgrenEisenbud}. We now give the definition of the EKL form.
\begin{definition}
Consider a morphism $P:\mathbb A^n\to \mathbb A^n$ with $P(0) = 0$. Assume that this zero is isolated. We can write $P$ as $(P_1,\dots,P_n)$, with $P_i = \sum_{j=1}^n a_{ij}x_j$. Let $A=k[x_1,\dots,x_n]_{(x_1,\dots,x_m)}/(P)$ be the local algebra of $P$ at $0$. Define $E = \text{det}(a_{ij})$, where $a_{ij}$ are such that $f_i = \sum_j a_{ij}x_j$. 
       
Let $\phi:A\to k$ be any $k$-linear map and define the bilinear form $\beta_{\phi}(p,q) = \phi(pq)$. The Eisenbud-Khimshiashvili-Levine (EKL) class of $P$ is the class of $\beta_{\phi}\in GW(k)$ for any $\phi$ with $\phi(E) = 1$. It is denoted $w_0(P)$.
    \end{definition}
    
\begin{remark}
    $E$ basically carries the same information as the Jacobian determinant ${\rm det}J$ of $P$. Specifically, ${\rm det}J = \dim_k A \cdot E$. In particular, in characteristic zero, one can consider $J$ instead of $E$.
\end{remark}

    \begin{remark}\label{simple-zero}
        If $P$ has a simple zero at the origin, then $w_0(f)$ is simply the class $\langle E\rangle = \langle {\rm det}J\rangle$. This follows from the fact that in this case $A\cong k$. In particular, it corresponds to the local degree in this case, since $P$ having a simple zero implies $P$ is étale. 
    %     The result is true for nonsimple zeroes. 
    %     To prove that, in \cite{wickelgrenEisenbud}, the idea is to reduce the case where all other zeros except for the origin are simple, and use that the global degree (sum of local degrees of a preimage) is invariant.
     \end{remark}

    EKL classes are related to critical loci and Milnor fibres for isolated singularities. For $f:\mathbb A^n\to k$, the derivative of $f$ gives us a map $P:=df:\mathbb A^n\to \mathbb A^n$ as we considered above. Then, the EKL class $w_0(P)$ refines the Milnor number of the singularity. Indeed, over $\mathbb C$, the Milnor number is the vector space dimension of the quotient $A$, which is, by definition, the rank of the EKL quadratic form. This refinement was introduced in \cite[Section 6]{wickelgrenEisenbud}. 
    
    Over $\mathbb C$, the Milnor number of $f$ is closely related to the topology of the Milnor fibre, which is homotopic to a bouquet of $\mu$ spheres $S^n$, where $\mu$ is the Milnor number of $f$. This gives us the classical formula:
    $$
    \chi(F) = 1 + (-1)^{n-1}\mu(f),
    $$
where $F$ is the Milnor fibre of $f$. We suspect that this formula generalizes.

\begin{example}
    In the case considered in \ref{mfex}, we had $f:\mathbb A^2\to k$ given by $x^2 - y^2$. Its derivative is given by $$P:=df:\mathbb A^2\to \mathbb A^2$$ $$(x,y)\mapsto (2x,-2y).$$

    The Jacobian matrix, in this context, is simply the Hessian of $f$. This gives us:
    $$
    J(x,y) = \begin{bmatrix}
        2 & 0 \\
        0 & -2\\
    \end{bmatrix},
    $$
    for any $(x,y)$.
    
    Now, the $\mathbb A^1$-Milnor number can be easily computed by the consideration on Remark \ref{simple-zero} and Equation \ref{degree}. 
    $$
    \mu^{\mathbb A^1} = \langle {\rm det} J(0)\rangle = \langle -4\rangle =\langle-1\rangle \in GW(k).
    $$

    Finally,
    $$
    \chi^{\mathbb A^1}_c(S_{f,0}) =\chi^{\mathbb A^1}_c(1-\Lm) = \langle1\rangle - \langle-1\rangle = \langle1\rangle + (-\langle1\rangle)^{2-1}\cdot \langle-1\rangle = \langle1\rangle + (-\langle1\rangle)^{n-1}\mu^{\mathbb A^1}(f).
    $$
\end{example}

Of course, in general, one has to consider an equivariant version of the $\mathbb A^1$-Euler characteristic, which takes into account the action of $\hat \mu$, since the classes $S_{f,x}$ are in $\mathcal M_k^{\hat\mu}$. Many interesting questions can be asked regarding this topic: what is the general relationship between the $\mathbb A^1$-Milnor number of a function $f$ and the $\mathbb A^1$-Euler characteristic of the motivic class $S_f$? Is there anything that can be said for non isolated singularities? Some of these questions were studied in \cite{Azouri_2022}.

   %Maybe add some Physics here? How exactly can these classes help us in giving a physical interpretation?

\section{Degree zero invariants of \texorpdfstring{$\mathbb A^3$}{A3}}
	
\subsection{The Hilbert Scheme of points of the affine space as a critical locus} We recall how to realize the Hilbert scheme of $n$ points  of $\mathbb A^3$ as a critical locus of a function. 
    
    Fix the notation  ${\rm Hilb}^n(\mathbb A^k)$ for the Hilbert scheme of $n$ points of $\mathbb A^k$ and
    $$
    \mathbb A^k = {\rm Spec\ }k[x_1,\dots,x_k] = {\rm Spec\ }k[x]. 
    $$

    The following considerations can be found in the notes by Nakajima \cite{Nakajima_1999}.
    
    In the affine space, a subscheme $B\subset\mathbb A^k$ of dimension 0 and degree $n$ correspond to a quotient of $k[x]$ of dimension $n$. Therefore, if we fix a vector space $V_n$ of dimension $n$, the structure we need to add in order to get a module consists of an action of $k[x]$ and an element $1\in V_n$ which generates the whole space under the action. An action of $k[x]$ is the choice of $k$ elements of ${\rm Hom}(V,V)$ which commute. This allows us consider the set:
    $$
   B(V) =  \{(A_1,\dots,A_k,v)\in {\rm Hom}^k(V,V)\times V\ |\ [A_i,A_j] = 0,\, \text{ $v$ generates $V$ under the action} \}.
    $$

    To get the Hilbert Space from $B(V)$, we need to mod by the action of $GL(V)$ (by conjugation on the $A_i$). This implies that:
    \begin{proposition}
    The Hilbert Scheme of points of $\mathbb A^k$ can be represented as
    \begin{equation}
{\rm Hilb}^n(\mathbb A^k) \cong\left\{\left(A_1,\dots,A_k, v\right) \middle| \begin{aligned}
&\text { (i) }\left[A_i, A_j\right]=0 \text{ for all $i, j$} \\
&\text { (ii) } \text{$v$ generates $V$ under the action of the $A_j$}\\
\end{aligned}\right\} \Bigg/ \mathrm{GL}_n(k),
\end{equation}
\end{proposition}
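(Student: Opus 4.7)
The plan is to identify the functor of points of $\mathrm{Hilb}^n(\mathbb A^k)$ with the functor represented by the quotient on the right-hand side. A $k$-point of $\mathrm{Hilb}^n(\mathbb A^k)$ is a length-$n$ subscheme $B \subset \mathbb A^k$, equivalently a surjection $\pi: k[x_1,\dots,x_k]\twoheadrightarrow \mathcal O_B$ with $\dim_k \mathcal O_B = n$, so it suffices to match this data with equivalence classes of tuples $(A_1,\dots,A_k,v)$ satisfying (i) and (ii).

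First I would fix an abstract $n$-dimensional vector space $V_n$ and show that a quotient $\pi$ as above, together with an identification $\mathcal O_B \cong V_n$, is equivalent to a tuple $(A_1,\dots,A_k,v)$ satisfying (i) and (ii). In one direction, the $A_i$ are defined as the endomorphisms of $V_n$ encoding the action of $x_i$, and $v := \pi(1)$; relation (i) reflects the commutativity of the $x_i$ in $k[x]$, and surjectivity of $\pi$ is precisely the cyclicity (ii). Conversely, given $(A_1,\dots,A_k,v)$ one defines $\pi:k[x]\to V_n$ by $P\mapsto P(A_1,\dots,A_k)v$; relation (i) makes this a well-defined algebra-module map, (ii) makes it surjective, and its kernel is the ideal of a length-$n$ subscheme $B$. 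Since the identification $\mathcal O_B\cong V_n$ is unique only up to the natural $\mathrm{GL}_n(k)$-action (conjugation on the $A_i$, standard action on $v$), two tuples define the same subscheme if and only if they lie in the same orbit, giving the set-theoretic bijection.

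The main obstacle, and the step that requires care, is upgrading this to a scheme-theoretic isomorphism, which hinges on the quotient by $\mathrm{GL}_n$ being well-behaved. The key observation is that the cyclicity condition (ii) forces the $\mathrm{GL}_n$-action on $B(V_n)$ to be free: any stabilizer $g$ fixes $v$ and commutes with each $A_i$, hence fixes every element of $\mathrm{span}\{P(A_1,\dots,A_k)v\} = V_n$, so $g = \mathrm{id}$. Local triviality of the corresponding principal bundle can then be established by using $v$ together with well-chosen monomials $A^I v$ to produce local bases of $V_n$ over open subsets of $B(V_n)$, yielding local sections of the $\mathrm{GL}_n$-action; this shows the geometric quotient exists as a scheme.

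Finally, I would repeat the entire construction in families over an arbitrary base $S$, replacing $V_n$ by a locally free $\mathcal O_S$-module of rank $n$ and $\mathcal O_B$ by a flat finite $\mathcal O_S$-algebra of degree $n$. The same arguments then give a natural isomorphism of functors $S \mapsto \mathrm{Hilb}^n(\mathbb A^k)(S)$ and $S\mapsto (B(V_n)/\mathrm{GL}_n)(S)$, hence the desired isomorphism of representing schemes. Details of the relative setup can be found in Nakajima's notes \cite{Nakajima_1999}, on which the statement is modeled.
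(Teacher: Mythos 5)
Your proposal is correct and follows essentially the same route as the paper, which identifies length-$n$ subschemes of $\mathbb A^k$ with $n$-dimensional cyclic $k[x]$-modules, i.e.\ commuting tuples with a cyclic vector modulo the $\mathrm{GL}_n$-change of basis, and defers the details to Nakajima's notes. You supply more detail than the paper does on the scheme-theoretic side (freeness of the action via cyclicity and local triviality of the quotient), but this is an elaboration of, not a departure from, the same argument.
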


In the case $k=3$, this space can be seen as a critical locus. This is done by considering the map 
$f_n: (A,B,C,v)\mapsto {\rm Tr}([A,B]C)$ on the smooth space $$
M_n=\left\{\left(A_1,\dots,A_k, v\right) \mid 
 \text{$v$ generates $V$ under the action of the $A_j$}\\
\right\} / \mathrm{GL}_n(k),
$$
    
\begin{proposition} ${\rm Hilb}^n(\mathbb A^3) = \{df_n = 0\}\subset M$
        
    \end{proposition}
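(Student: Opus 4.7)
The plan is to compute the critical locus of $f_n$ directly by taking variations in each of the matrix arguments $A, B, C$ and in the cyclic vector $v$, and to check that the resulting equations are exactly the commutation relations defining the Hilbert scheme as described in the previous proposition.

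First, I would observe that the function $f_n(A,B,C,v) = \mathrm{Tr}([A,B]C)$ is manifestly $GL_n(k)$-invariant, because conjugating all three matrices simultaneously preserves the commutator and trace, while $v$ does not enter the expression. Hence $f_n$ descends to a well-defined regular function on the smooth quotient $M_n$, and computing $\{df_n = 0\}$ on $M_n$ is the same as computing the locus in the $GL_n$-invariant subset of cyclic tuples where the derivative in all $GL_n$-horizontal directions vanishes. Since $f_n$ is already invariant, this is simply the full differential on the pre-quotient.

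Next I would compute the variations. Expanding $f_n = \mathrm{Tr}(ABC) - \mathrm{Tr}(BAC)$ and using cyclicity of the trace:
\begin{align*}
\delta_A f_n &= \mathrm{Tr}\bigl(\delta A \cdot (BC - CB)\bigr) = \mathrm{Tr}\bigl(\delta A \cdot [B,C]\bigr), \\
\delta_B f_n &= \mathrm{Tr}\bigl(\delta B \cdot (CA - AC)\bigr) = \mathrm{Tr}\bigl(\delta B \cdot [C,A]\bigr), \\
\delta_C f_n &= \mathrm{Tr}\bigl(\delta C \cdot (AB - BA)\bigr) = \mathrm{Tr}\bigl(\delta C \cdot [A,B]\bigr),
\end{align*}
and $\delta_v f_n = 0$ since $v$ does not appear. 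Because the trace pairing on $\mathrm{End}(V)$ is non-degenerate, requiring each of these to vanish for all $\delta A, \delta B, \delta C$ is equivalent to the three commutation relations $[A,B] = [B,C] = [C,A] = 0$.

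Combining this with the cyclicity condition built into $M_n$, a point of $M_n$ lies in the critical locus of $f_n$ if and only if $A, B, C$ pairwise commute and $v$ is cyclic for their joint action. By the preceding proposition, this is exactly the description of $\mathrm{Hilb}^n(\mathbb A^3)$. I do not anticipate any serious obstacle: the only subtlety is the descent to the $GL_n$-quotient, which is harmless thanks to invariance of $f_n$, and the non-degeneracy of the trace form which makes the derivative computation sharp.
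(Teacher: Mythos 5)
Your proof is correct and follows essentially the same route as the paper: compute the differential of $\mathrm{Tr}([A,B]C)$ in the matrix directions (the paper does this in coordinates for the $C$-derivative and then invokes $\mathrm{Tr}([A,B]C)=\mathrm{Tr}(A[B,C])=\mathrm{Tr}(B[C,A])$, while you do all three variations at once via cyclicity and non-degeneracy of the trace pairing) and identify the critical equations with the commutation relations. Your explicit remark on $GL_n$-invariance and descent to $M_n$ is a small point the paper leaves implicit, but the substance of the argument is the same.
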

    \begin{proof}
                
        By our discussion above, it is enough to show that the condition $df_n = 0$ correspond to commutativity of the $A_i$. Indeed, we have:
        $$
        {\rm Tr}([A,B]C) = \sum_{i}\sum_k\sum_j(a_{ij}b_{jk} - b_{ij}a_{jk})c_{ki}.
        $$
        
        Therefore, if the derivatives with respect to each entry of $C$ are all zero, we get that $[A,B] = 0$. As ${\rm Tr}([A,B]C) = {\rm Tr}(A[B,C]) = {\rm Tr}(B[C,A])$, the vanishing of the other derivatives implies that the other pairs of matrices commute.
    \end{proof}

\subsection{The virtual classes of the Hilbert Scheme of Points } To compute the virtual classes over $\mathbb C$, the authors of \cite{behrend_motivic_2013} used Proposition \ref{torus} and the fact that there is a natural toric action on $M$ which descends given by
$$
(t_1,t_2,t_3)\cdot (A,B,C,v) \mapsto (t_1A,t_2B,t_3C,t_1t_2t_3v)
$$
which satisfies all the hypothesis. For details, see \cite[Lemma 2.4]{behrend_motivic_2013}.

This implies that the virtual class of ${\rm Hilb}^n(\mathbb C^3)$ can be computed by the difference $[f_n^{-1}(1)]-[f_n^{-1}(0)]$. This difference was computed in \cite{behrend_motivic_2013} to correspond to a generating series
		\begin{equation} 
			\label{partition}
			Z_{\mathbb C^3}(t) = \sum_{n=0}^{\infty}[{ \rm Hilb}(\mathbb C^3)]_{{\it vir}}t^n = \prod_{m=1}^{\infty}\prod_{k=0}^{m-1}(1-\mathbb L^{k+2-m/2}t^m)^{-1}.
		\end{equation}

  \begin{remark}
      The computation itself mainly relies on the motivic classes of Grassmanians, general linear groups and of the variety of commuting matrices, which are the same over any field. The only difficulty in generalizing this computation would be to prove that the virtual class is given by the difference of the fibers. Sample targets for generalization include examples studied by Choi-Katz-Klemm in \cite{MR3201216}.
  \end{remark}

Applying the morphism in equation \ref{morphsimA1}, we can get arithmetic DT invariants as defined in Section \ref{A1}.
	\begin{proposition}
	     The DT invariants for $\mathbb A^3$  can be refined over $GW(k)$ by the generating series:
 	\begin{equation}
 			\prod_{n=1}^{\infty} (\langle 1\rangle -(\alpha t)^{2n-1})^{-1}\prod_{n=1}^{\infty}(\langle1\rangle - (\alpha t)^nH + \langle-1\rangle(\alpha t)^{2n})^{-\lfloor\frac n2\rfloor}
 	\end{equation}

	\end{proposition}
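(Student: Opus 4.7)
The plan is to apply the ring homomorphism $\chi_c^{\mathbb A^1}:\mathcal M_{\mathbb C}[[t]]\to GW(k)(\alpha)[[t]]$ from \eqref{morphsimA1} coefficient-wise to the Behrend-Bryan-Szendr\H{o}i partition function \eqref{partition}, and then to regroup the resulting infinite product until it matches the claimed form. The one extra algebraic input beyond Proposition~\ref{morphism} is the elementary factorization
\[
(1-\langle 1\rangle x)(1-\langle -1\rangle x)\;=\;1-Hx+\langle -1\rangle x^2,
\]
which produces the binomial factors appearing in the second product of the claim.

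Concretely, using $\chi_c^{\mathbb A^1}(\mathbb L^{1/2})=\alpha$ with $\alpha^2=\langle -1\rangle$, each factor in \eqref{partition} maps to $1-\alpha^{2k+4-m}t^m=1-\langle -1\rangle^{k+m}(\alpha t)^m$: write $\alpha^{2k+4-m}t^m=\alpha^{2k+4-2m}(\alpha t)^m$ and use $\alpha^{2j}=\langle -1\rangle^j$ together with $\langle -1\rangle^2=\langle 1\rangle$, noting that $k+2-m$ and $k+m$ have the same parity. I then split by the parity of $m$: for fixed $m$, as $k$ ranges over $\{0,\dots,m-1\}$ the coefficient $\langle -1\rangle^{k+m}$ equals $\langle 1\rangle$ and $\langle -1\rangle$ in proportions $(n,n)$ when $m=2n$ and $(n-1,n)$ when $m=2n-1$. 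The factorization above then pairs, at each total degree $N\ge 1$, $\lfloor N/2\rfloor$ copies of $(1-(\alpha t)^N)$ with $\lfloor N/2\rfloor$ copies of $(1-\langle -1\rangle(\alpha t)^N)$, contributing one factor of $(1-H(\alpha t)^N+\langle -1\rangle(\alpha t)^{2N})^{-\lfloor N/2\rfloor}$; this is precisely the second product of the claim. Unpaired residual factors occur only when $N$ is odd (since the split for $m=2n-1$ is the unbalanced one), leaving exactly one extra factor per $n\ge 1$ that assembles into the first product.

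The main obstacle is this last bookkeeping step: verifying that the counts of $\langle 1\rangle$-type and $\langle -1\rangle$-type factors at each total degree $N$ match the exponent $\lfloor N/2\rfloor$ in the second product of the claim, and that the unpaired residues for odd $N$ reassemble cleanly into the first product (in particular, fixing the normalization of the formal square root $\alpha$ of $\langle -1\rangle$ so that the resulting expression is the one stated). Beyond this combinatorial reshuffle, nothing deep is needed: Proposition~\ref{morphism} supplies the map, the generating series \eqref{partition} is taken as input, and the quadratic factorization identity does all of the remaining algebraic work.
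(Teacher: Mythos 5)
Your proposal is correct and follows essentially the same route as the paper: apply the ring homomorphism $\chi_c^{\mathbb A^1}$ of \eqref{morphsimA1} factor-by-factor to \eqref{partition}, use the identity $(1-x)(1-\langle-1\rangle x)=\langle1\rangle-Hx+\langle-1\rangle x^2$, and regroup by the parity of $m$. Your parity bookkeeping is in fact slightly more careful than the paper's: the unpaired factor at odd degree $2n-1$ really is the $\langle-1\rangle$-type one, $(\langle1\rangle-\langle-1\rangle(\alpha t)^{2n-1})^{-1}$, which agrees with the stated first product only after the harmless renormalization $\alpha\mapsto\langle-1\rangle\alpha$ of the square root (under which the second product is invariant, since $\langle-1\rangle H=H$) --- exactly the normalization point you flag.
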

\begin{proof}
Applying the morphism, we first just send $\mathbb L$ to $\langle-1\rangle$ and 
$\mathbb L^{\frac12}$ to $\alpha$.
$$
\prod_{m=1}^{\infty}\prod_{k=0}^{m-1}(\langle 1\rangle- \langle-1\rangle^{k+2}\alpha^{-m}t^m)^{-1}	
$$
Notice that
$$
(\langle 1\rangle-\langle-1\rangle\alpha^{m}t^m)(\langle 1\rangle-\alpha^{m}t^m) = 
(\langle1\rangle - \alpha^mt^mH + \langle-1\rangle\alpha^{2m}t^{2m})
$$
where $H$ is the hyperbolic form given by $\langle1\rangle+\langle-1\rangle$. 
	
In the case $m$ is even, the product above will appear exactly $\frac m2$ times. For $m$ odd, it 
appears $\frac{m-1}2 =\lfloor \frac m2\rfloor $ times and we get an extra factor of the form 
$(\langle 1\rangle -\alpha^mt^m)^{-1}$. 
$$
\prod_{m=1}^{\infty}\prod_{k=0}^{m-1}(\langle 1\rangle- \langle-1\rangle^{k+2}\alpha^{m}t^m)^{-1} 
= \prod_{m \text{ odd}} (\langle 1\rangle -\alpha^mt^m)^{-1}	\prod_{m=1}^{\infty}(\langle1
\rangle - (\alpha t)^mH + \langle-1\rangle(\alpha t)^{2m})^{-\lfloor\frac m2\rfloor}
$$
By making $m = 2n - 1$ in the first product and $m = n$ in the second, we get:
\begin{equation}
\prod_{n=1}^{\infty} (\langle 1\rangle -(\alpha t)^{2n-1})^{-1}\prod_{n=1}^{\infty}
(\langle1\rangle (\alpha t)^nH + \langle-1\rangle(\alpha t)^{2n})^{-\lfloor\frac n2\rfloor}
\end{equation}
\end{proof}

\begin{remark}
The above refinement is compatible with previous results over $\mathbb R$ and $\mathbb C$.
\end{remark}

Taking $k=\mathbb C$, we have $GW(k) = \mathbb Z$ and $\alpha = -1$, which results in the classical 
MacMahon generating function for the number of plane partitions:
	
$$
\prod_{n=1}^{\infty} (1 -(-t)^{2n-1})^{-1}\prod_{n=1}^{\infty}(1 - 2(-t)^n + 
(-t)^{2n})^{-\lfloor\frac n2\rfloor} =
$$
$$
\prod_{n=1}^{\infty} (1 -(-t)^{2n-1})^{-1}\prod_{n=1}^{\infty}(1 - 
(-t)^n)^{-2\lfloor\frac n2\rfloor} = \prod_{n=1}^{\infty}(1 - (-t)^n)^{-n} = M(-t)
$$

For $k=\mathbb R$, after computing the signature morphism (i.e., sending $\langle 1\rangle$ to $1$, 
$\langle-1\rangle$ to $-1$ and taking $\alpha = i$), we get the symmetric MacMahon function, 
counting symmetric plane partitions, which correspond to the real count as computed by Pasquetti-Krefl-Walcher 
in \cite{krefl_real_2010}.
$$
\prod_{n=1}^{\infty} (1 -(-i t)^{2n-1})^{-1}\prod_{n=1}^{\infty}(1 - 0(-i t)^n + 
(-i t)^{2n})^{-\lfloor\frac n2\rfloor} =
$$
$$
\prod_{n=1}^{\infty} (1 -(-i t)^{2n-1})^{-1}\prod_{n=1}^{\infty}(1 - 
(-i t)^{2n})^{-\lfloor\frac n2\rfloor} = M^{\text{sym}}(-i t) 
$$

% Notice that the $\mathbb A^1$-compactly supported Euler characteristic is, in this case, simply a combination of the real and complex Euler characteristics. This is very common for global invariants and is explained by a result of Bachman and Wickelgren (cf. Thm 5.1 in \cite{bachmann_euler_2023}). They state that, for varieties proper over $\mathbb Z$, the degree of the Euler class of any vector bundle is always a combination of the real and complex degrees.

\subsection{Real and refined Gopakumar-Vafa invariants at the Castelnuovo bound}

Our interest now turns to the computation of Gopakumar-Vafa invariants for $M$ a smooth quintic hypersurface 
in $\mathbb P^4$, through their relation to DT invariants. In this case, the moduli spaces of interest are $I_n(M,d)$, 
which correspond 
to the Hilbert scheme parameterizing subschemes of $M$ with Hilbert polynomial given by $dt+n$, that is, curves 
of degree $d$ and arithmetic genus $1-n$. The GV invariants $n^d_g$ correspond to DT invariants $I_{1-g,d}$. Recent work 
by Liu-Ruan \cite{liu_castelnuovo_2022} and Alexandrov-Feyzbakhsh-Klemm-Pioline-Schimannek \cite{alexandrov2023quantum}
has established the famous Castelnuovo bound for Gopakumar-Vafa invariants, which was predicted in physics,
$$
n^d_g = I_{1-g,d} = 0,\text{ for any  $d$ and $g$ with }  g>\frac{d^2 + 5d+10}{10}=:B(d).
$$
and led to a computation of the numbers $n^d_{B(d)} = I_{1-B(d),d}$. That is, the numbers $n^d_g$ when the 
pair $(g,d)$ is on the bound. Here, we write formulae for the motivic and arithmetic refinements of such 
numbers at the bound. This was done using the fact that, for $n=B(d)$, $\mathcal M_{n,d}$ is not only 
smooth but a projective bundle over a projective space (see \cite[Prop. 6.2]{liu_castelnuovo_2022}, 
where they prove it over $\mathbb C$).We believe that this is true over any $k$ of characteristic zero.

\begin{proposition}
    Let $g = B(d)$. This implies that $B(d)$ is an integer, which means that $d$ can be written as $d=5m$ for some $m$. Assuming that we can write the moduli space as a projective bundle as above, motivic and arithmetic refinements of the invariants $I_{n,d}$ for $n = 1-g =1 - B(d)$, are given by the formulae:
$$
[\mathcal M_{n,d}]_{{\it vir}} =\Lm^{\frac N2+2}\frac{(\Lm^{N+1} - 1)(\Lm^5 - 1)}{(\Lm-1)^2}\in\mathcal M_k
$$
and, applying the morphism $\chi^{\mathbb A^1}$:
$$
\chi^{\mathbb A^1}\left([\mathcal M_{n,d}]_{{\it vir}}\right) =   \left\{\begin{aligned}&\alpha\cdot\left(\frac{6+5N}2\langle1\rangle + \frac{4+5N}2\langle-1\rangle\right), \text{ for } m=0,1\, {\rm mod}\, 4 \\ &\frac{5(N+1)}2H,\text{ for } m=2,3\, {\rm mod}\, 4\end{aligned}\right.
$$
 where $N=\binom{m+3}{3} -\binom{m-2}{3}-1$.
\end{proposition}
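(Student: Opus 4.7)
The plan is to handle the motivic and arithmetic parts in sequence. First I would invoke the hypothesis---extending \cite[Prop. 6.2]{liu_castelnuovo_2022} from $\mathbb{C}$ to a general characteristic-zero field $k$---that $\mathcal{M}_{n,d}$ is a Zariski-locally trivial $\mathbb{P}^N$-bundle over $\mathbb{P}^4$, where $N = \binom{m+3}{3} - \binom{m-2}{3} - 1$ is the output of Riemann--Roch for the ideal sheaves on the quintic. The multiplicativity of classes in $K_0(\mathrm{Var}(k))$ for such fibrations, recorded in the remark following the definition of $\mathbb{L}$, then immediately yields
$$
[\mathcal{M}_{n,d}] = [\mathbb{P}^N]\cdot[\mathbb{P}^4] = \frac{(\mathbb{L}^{N+1}-1)(\mathbb{L}^5-1)}{(\mathbb{L}-1)^2}.
$$
Since $\mathcal{M}_{n,d}$ is smooth of dimension $N+4$, Example~\ref{smoothvirtual} converts this into the virtual class by adjoining the prefactor $\mathbb{L}^{\pm(N+4)/2}$ coming from the dimension, giving the stated motivic formula.

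For the arithmetic refinement, I would apply $\chi_c^{\mathbb{A}^1}$ from \eqref{morphsimA1} termwise, using $\mathbb{L}\mapsto\langle -1\rangle$ and $\mathbb{L}^{1/2}\mapsto\alpha$. A geometric-series computation gives
$$
\chi_c^{\mathbb{A}^1}([\mathbb{P}^k]) = \sum_{j=0}^k \langle -1\rangle^j = \begin{cases} \tfrac{k+1}{2}H, & k \text{ odd}, \\ \tfrac{k}{2}H + \langle 1\rangle, & k \text{ even},\end{cases}
$$
so that $\chi_c^{\mathbb{A}^1}([\mathbb{P}^4]) = 2H + \langle 1\rangle$ is fixed while $\chi_c^{\mathbb{A}^1}([\mathbb{P}^N])$ depends on the parity of $N$. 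Forming the product with the scalar $\alpha^{N+4}$ and simplifying repeatedly by the identities $H^2 = 2H$ and $\langle a\rangle H = H$ for any unit $a$, the answer naturally bifurcates: in the odd-$N$ regime the $\mathbb{P}^N$-contribution is already a multiple of $H$ and absorbs the $\langle 1\rangle$-summand of $2H+\langle 1\rangle$ to collapse into a pure-hyperbolic result $\tfrac{5(N+1)}{2}H$, whereas in the even-$N$ regime the surviving $\langle 1\rangle$-term expands $(\tfrac{N}{2}H+\langle 1\rangle)(2H+\langle 1\rangle)$ into the asymmetric form $\tfrac{6+5N}{2}\langle 1\rangle+\tfrac{4+5N}{2}\langle -1\rangle$.

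To finish, I would translate the parity of $N$, together with the residue of $N+4$ modulo $4$ controlling the prefactor $\alpha^{N+4}$, into the stated congruence classes of $m$ modulo $4$. This reduces to expanding $\binom{m+3}{3}-\binom{m-2}{3}-1$ modulo small integers; the integrality of the coefficients $\tfrac{6+5N}{2}$, $\tfrac{4+5N}{2}$, and $\tfrac{5(N+1)}{2}$ in the respective regimes serves as a consistency check, as does specialising to $\mathbb{C}$ and $\mathbb{R}$ via rank and signature and comparing with the complex Euler-characteristic and real counts at the Castelnuovo bound.

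The hardest part is not the Grothendieck-ring bookkeeping, which is essentially mechanical once the projective bundle structure is in hand, but rather granting the hypothesis itself: extending \cite[Prop. 6.2]{liu_castelnuovo_2022} from $\mathbb{C}$ to an arbitrary field of characteristic zero. Their argument rests on Cayley--Bacharach-type geometry of quintic hyperplane sections and pencils of residual ideals, all of which should in principle descend, but verifying this rigorously requires checking that every intermediate construction stays $k$-rational and that the bundle is genuinely Zariski-locally trivial---not merely étale-locally trivial, which would not suffice for the multiplicativity invoked above. This is the real content of the proposition.
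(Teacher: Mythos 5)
Your proposal follows essentially the same route as the paper's own proof: invoke the $\mathbb{P}^N$-bundle structure over $\mathbb{P}^4$ and multiplicativity in $K_0(\var(k))$, multiply by the $\Lm^{\frac N2+2}$ prefactor from the smooth case of the virtual class, and then push through $\chi_c^{\mathbb{A}^1}$ with a parity analysis of $N$ (translated into $m \bmod 4$). If anything your write-up is the more careful one --- your parity formula $\chi_c^{\mathbb{A}^1}([\mathbb{P}^k])=\frac{k+1}{2}H$ for $k$ odd and $\frac{k}{2}H+\langle1\rangle$ for $k$ even is the correct version (the paper's proof sketch states the two parities the other way around), and you rightly identify that the genuine mathematical content is the descent of the projective-bundle structure of \cite[Prop.~6.2]{liu_castelnuovo_2022} to arbitrary characteristic-zero fields, which the paper likewise only assumes.
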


\begin{proof}
    The base is $\mathbb P^4$ and the fibre is $\mathbb P^N$, where $N=\binom{m+3}{3} -\binom{m-2}{3}-1$.
    
    Then, the first part of the result is simply given by using the fact that, for smooth varieties, the motivic virtual class is simply the class of the variety times $\Lm^{-\dim \mathcal M_{n,d}/2}$ (see Definition \ref{virtclassdef} and Example \ref{smoothvirtual}). To compute $[\mathcal M_{n,d}]$, we use that the class of a fibre bundle is the product of the fibre and the base.

    Using that
    $$
    [\mathbb P^r] = \Lm^r + \Lm^{r-1} + \dots \Lm+ 1 = \frac{\Lm^{r+1}}{\Lm-1}
    $$
    and that $\dim\mathcal M_{d,n} = N+4$, we get
    $$
     [\mathcal M_{n,d}]_{vir} =\Lm^{-\frac{\dim\mathcal M_{n,d}}2}[\mathcal M_{n,d}] = \Lm^{\frac N2+2}[\mathbb P^N]\cdot [\mathbb P^4] =\Lm^{\frac N2+2}\frac{(\Lm^{N+1} - 1)(\Lm^5 - 1)}{(\Lm-1)^2}\in\mathcal M_k.
     $$

     Finally, applying the morphism, we only need to keep track of whether $N$ is even or odd. If it is even, the $\mathbb A^1$-Euler characteristic of $\mathbb P^N$ is $\frac{N+1}2H$ and if it is odd it is given by $\frac{N+1}2\langle1\rangle + \frac{N-1}{2}\langle-1\rangle$. 
\end{proof}

The formula above, specially the one in $GW(k)$, gives a prediction of what should be the $\mathbb A^1$-count of curves of higher genus on the quintic over any field. It would be interesting to check if this result can be reached with direct methods, without making use of motivic DT invariants.

% \begin{question}
%     Is there a way to prove directly that the second formula on the proposition above is indeed the $\mathbb A^1$-count of curves of degree $d$ and genus $g$ on a generic quintic hypersurface in $\mathbb P^4$ over a field $k$?
% \end{question}

\bibliographystyle{abbrv}
\bibliography{MotivicAndArith.bib}

	\end{document}